\definecolor{darkgreen}{rgb}{0,0.75,0}
\definecolor{darkred}{rgb}{0.75,0,0}
\definecolor{darkmagenta}{rgb}{0.5,0,0.5}
\newtheorem{theorem}{Theorem}[section]
\newtheorem{cor}[theorem]{Corollary}
\newtheorem{lem}[theorem]{Lemma}
\newtheorem{prop}[theorem]{Proposition}
\theoremstyle{definition}
\newtheorem{definition}[theorem]{Definition}
\newtheorem{remark}[theorem]{Remark}
\numberwithin{equation}{section}
\def\be{\begin{equation}}
\def\ee{\end{equation}}
\def\bes{\begin{equation*}}
\def\ees{\end{equation*}}
\newcommand{\set}[1]{\left\{ #1 \right\}}
\newcommand{\abs}[1]{{\left\vert\kern-0.25ex #1
		\kern-0.25ex\right\vert}}
\newcommand{\one}{\mathds{1}} 
\newcommand{\diag}[0]{\operatorname{diag}}
\DeclareMathOperator*{\esssup}{ess\,sup}
\DeclareMathOperator*{\essinf}{ess\,inf}
 \def\sE {{\mathcal E}} \def\sF {{\mathcal F}}
  \def\sL {{\mathcal L}}
 \def\bE {{\mathbb E}}
\def\bP {{\mathbb P}}  \def\bR {{\mathbb R}}
\def\sms{\smallskip}
\def\ignore#1{}
\def\lam {\lambda}  
\def\eps{\varepsilon}
\def\Gam{\Gamma}
\def\to {\rightarrow}
\def\q{\quad} \def\qq{\qquad}
\def\dint{\int\kern-.6em\int}
\newcommand\restr[2]{{
		\left.\kern-\nulldelimiterspace 
		#1 
		\vphantom{\big|} 
		\right|_{#2} 
	}} 
	\def\supp{\mathop{{\rm supp}}}
	\newcommand{\on}[1]{\operatorname{ #1}}
	\def\wt{\widetilde}
	\def\wh{\widehat}
	\def\be{\begin{equation}}
	\def\ee{\end{equation}}
	\def\bes{\begin{equation*}}
	\def\ees{\end{equation*}}
	\def\ba{\begin{align}}
	\def\ea{\end{align}}
	\def\xxea{\end{align}}
\def\bas{\begin{align*}}
\def\eas{\end{align*}}
\def\proof{{\smallskip\noindent {\em Proof. }}}
\def\qed{{\hfill $\square$ \bigskip}}
\definecolor{dgreen}{rgb}{0, 0.6, 0.1}
\definecolor{dblue}{rgb}{0, 0.0, 0.6}
\definecolor{vdblue}{rgb}{0,.08, 0.45}
\definecolor{dred}{rgb}{0.7, 0.0, 0.0}
\definecolor{vdblue}{rgb}{0,.08, 0.45}
\definecolor{purple}{rgb}{0.6, 0.0, 0.6}
\definecolor{mytext}{rgb}{0.1, 0.1, 0.1}
\begin{document}
	
	\font\titlefont=cmbx14 scaled\magstep1
	\title{ On the comparison between jump processes and subordinated diffusions
		 \titlefont }    
	\author{	Guanhua Liu\footnote{Research partially supported by China Scholarship Council.} ,	Mathav Murugan\footnote{Research partially supported by NSERC (Canada) and the Canada research chair program.} 
		}
	\maketitle
	\vspace{-0.5cm}
\begin{abstract}
	Given a symmetric diffusion process and a jump process on the same underlying space, is there a subordinator such that the jump process and the subordinated diffusion processes are comparable? 
	We address this question when the diffusion satisfies a sub-Gaussian heat kernel estimate and the jump process satisfies a polynomial-type jump kernel bounds.
	Under these assumptions , we obtain necessary and sufficient conditions on the jump kernel estimate for such a subordinator to exist.
	As an application of our results and the recent stability results of Chen, Kumagai and Wang, we obtain parabolic Harnack inequality   for a large family of jump processes.
In particular, we show that any jump process  with polynomial-type jump kernel bounds on such a space satisfy the parabolic Harnack inequality.

\vskip.2cm
\noindent {\it Keywords: subordination, jump processes, diffusions, parabolic Harnack inequality} 
\
\end{abstract}

\section{Introduction}
Let $(X(t))$ and $(Y_{\alpha}(t))$ denote the Brownian motion and symmetric $\alpha$-stable process on $\bR^n$ respectively, where $\alpha \in (0,2)$.
These process form the basic examples of symmetric diffusions and jump processes respectively.
 The jump kernel of the $\alpha$-stable process $(Y_{\alpha}(t))$ on $\bR^n$ is given by
\[
J(x,y) =  \frac{c_{n,\alpha}}{d(x,y)^{n+\alpha}}, \q \mbox{for all $x,y \in \bR^n$,}
\]
where $d$ denotes the Euclidean distance.
The processes  $(X(t))$ and $(Y_{\alpha}(t))$ are related via a  \emph{subordinator}. A subordinator is a one-dimensional L\'evy process with non-decreasing paths. The process $(Y_\alpha(t))$ has the same law as $(X(S(t)))$, where $(S(t))$ is a subordinator independent of $(X(t))$ and defined by its Laplace transform $\bE \exp(-\lam S(t))= \exp (-t \lam^{\alpha/2} )$ for all $t, \lam \ge 0$. Therefore one could study $\alpha$-stable processes using properties of Brownian motion and the subordinator. In particular, we have
\be \label{e:subjump}
\bP^x(Y_\alpha (t) \in A)= \int_0^\infty \bP^x (X(s) \in A) \, \eta_t(ds),
\ee
where $\eta_t$ is the law of the subordinator $S(t)$ described above.

A well-known important application of \eqref{e:subjump} is that one can obtain heat kernel bounds and parabolic Harnack inequality for the jump process $(Y_\alpha)$ by transferring heat kernel bounds and parabolic Harnack inequality for the diffusion $(X)$ along with heat kernel estimates on the subordinator (\cite[Section 5.2]{CKW3} and \cite[Section 4.1]{BKKL}).
In this work, we investigate the extent to which subordination can be used to jump processes. In particular, our work addresses the following questions:
\begin{enumerate}[(a)]
	\item Let $X$ be a $\mu$-symmetric diffusion that satisfies the parabolic Harnack inequality. Given a $\mu$-symmetric jump process $Y$, does there exist a subordinator $S$ such that the subordinated process $X \circ S$ has jump kernel comparable to that of $Y$? 
	\item In the setting above, does the jump process $Y$ also inherit the parabolic Harnack inequality from $X$? If so, what is the space time scaling of the process $Y$?
\end{enumerate}
Under fairly mild conditions on the jump kernel, we obtain a positive answer to both these questions.
Our answer to question (a) seems to be new even on $\bR^n$ (see Remark \ref{r:prev}). The motivation for question (a) arises from the beautiful recent  results of Chen, Kumagai, Wang concerning the stability of  parabolic Harnack inequality for jump processes \cite{CKW2}. Using their results a positive answer from question (b) follows from a positive answer to question (a).

\section{Framework and results}

\subsection{Dirichlet forms and symmetric Markov processes}
Throughout this work, we consider a complete, locally compact, separable, unbounded metric space $(M,d)$ equipped with a Random measure $\mu$ with full support, i.e., a Borel measure $\mu$ on $M$ that is finite on any compact set and positive on any non-empty open set. Such a triple $(M,d,\mu)$  is called a \emph{metric measure space}.

Let $(\mathcal{E},\mathcal{F})$ be a \emph{symmetric Dirichlet form} on $L^{2}(M,\mu)$.
That is, the domain $\mathcal{F}$ is a dense linear subspace of $L^{2}(M,\mu)$, such that
$\mathcal{E}:\mathcal{F}\times\mathcal{F}\to\mathbb{R}$
is a non-negative definite symmetric bilinear form which is \emph{closed}
($\mathcal{F}$ is a Hilbert space under the inner product $\sE_{1}(\cdot,\cdot):= \sE(\cdot,\cdot)+ \langle \cdot,\cdot \rangle_{L^{2}(M,\mu)}$)
and \emph{Markovian} (the unit contraction operates on $\sF$; $\wh u:=(u \vee 0)\wedge 1\in\mathcal{F}$ and $\mathcal{E}(\wh u, \wh u)\leq \mathcal{E}(u,u)$ for any $u\in\mathcal{F}$).
Recall that $(\mathcal{E},\mathcal{F})$ is called \emph{regular} if
$\mathcal{F}\cap\mathcal{C}_{\mathrm{c}}(M)$ is dense both in $(\mathcal{F},\mathcal{E}_{1})$
and in $(\mathcal{C}_{\mathrm{c}}(M),\|\cdot\|_{\mathrm{sup}})$.
Here $\mathcal{C}_{\mathrm{c}}(M)$ is the space of $\mathbb{R}$-valued continuous functions on $M$ with compact support.

Given a Dirichlet form $(\sE,\sF)$, there is an associated \emph{Markov semigroup} $(P_t)_{t \ge 0}$ on $L^2(M,\mu)$ and a non-positive definite self-adjoint \emph{generator} $\sL$ such that $P_t= e^{t \sL}$. Furthermore, by \cite[Theorem 1.3.1 and Lemma 1.3.4]{FOT} the Dirichlet form $(\sE,\sF)$ is given in terms of the semigroup by
\begin{align}
\sF&= \set{f \in L^2(M,\mu): \lim_{t \downarrow 0} \frac{1}{t}\langle f - P_t f, f \rangle < \infty }, \label{e:df1} \\
\sE(f,f) &=  \lim_{t \downarrow 0} \frac{1}{t}\langle f - P_t f, f \rangle, \q \mbox{for all $f \in \sF$,} \label{e:df2}
\end{align}
where $\langle \cdot,\cdot \rangle$ denotes the $L^2(M,\mu)$ inner product.
Recall that by the spectral representation, $t \mapsto \frac{1}{t}\langle f - P_t f, f \rangle$ is non-increasing and
\be \label{e:df3}
\limsup_{t \downarrow 0}\frac{1}{t}\langle f - P_t f, f \rangle = \lim_{t \downarrow 0} \frac{1}{t}\langle f - P_t f, f \rangle    \q \mbox {for any $f \in L^2(M,\mu)$.}
\ee
It is known that the semigroup extends to a contraction on any $L^p(M,\mu)$, where $p \in [1,\infty]$. The Markov semigroup is said to be \emph{conservative} if $P_t 1 = 1$ for any $t >0$.

Associated with a regular Dirichlet form $(\sE,\sF)$ on $L^2(M,\mu)$ is a $\mu$-symmetric \emph{Hunt process} $(X_t, t \ge 0, \mathbb{P}_x, x \in  M \setminus \mathcal{N})$, where $\mathcal N$ is a properly exceptional set for $(\sE,\sF)$.  
Recall that a Hunt process is a strong Markov process that is   right continuous and quasi-left continuous on the one-point compactification
$M_\partial:=M \cup\{\partial \}$ of $M$. 
The \emph{heat kernel} associated with the Markov semigroup $\{P_t\}$ (if it exists) is a family of measurable functions $p(t,\cdot,\cdot):M  \times M  \mapsto [0,\infty)$ for every $t > 0$, such that
\begin{align}
P_tf(x) &= \int p(t,x,y)  f(y)\, \mu(dy), \q \mbox{for all $f \in L^2(M,\mu), t>0$ and $x \in M$,} \label{e:hkint} \\
p(t,x,y) &= p(t,y,x), \q \mbox{for all $x, y \in M$ and $t>0$,} \label{e:hksymm} \\
p(t+s,x,y)&= \int p(s,x,y) p(t,y,z) \,\mu(dy), \q \mbox{for all $t,s>0$ and $x,y \in M$.} \label{e:ck}
\end{align}

We recall the notion of strongly local and pure jump type Dirichlet forms.
For a Borel measurable function $f: M \to \bR$ or an $\mu$-equivalence class $f$ of such functions, $\supp_{\mu}[f]$ denotes the support of the measure $\abs{f}\,d\mu$, i.e., the smallest closed subset $F$ of $M$ with $\int_{M \setminus F} \abs{f}\,d\mu=0$, which exists since $M$ is separable. 
A Dirichlet form $(\sE,\sF)$ on $L^2(M,\mu)$ is said to be \emph{strongly local} if $\sE(f,g)=0$ for all functions $f,g \in \sF$ with $\supp_\mu[f],\supp_\mu[g]$ compact and $\supp_{\mu}[f-a\one_{M}] \cap \supp_{\mu}[g] = \emptyset$ for some $a \in \bR$. We say that a Dirchlet form $(\sE,\sF)$ on $L^2(M,\mu)$ is of pure jump type, if  there exists a symmetric positive Radon measure $\wt J$ on $M \times M \setminus \diag$ such that 
\[ 
\sE(f,f) = \int_{M \times M \setminus \diag} (f(x)-f(y))^2\, \wt J(dx,dy), \q \mbox{for all $f \in \sF$,}
\]
where $\diag = \set{(x,x) \mid x \in M}$ denotes the diagonal. The Radon measure $\wt J$ is called the \emph{jumping measure}; we refer to the Beurling-Deny decomposition for the reason behind this terminology \cite[Theorem 3.2.1 and Lemma 4.5.4]{FOT}. A symmetric Borel measurable function $J: M \times M \setminus \diag \to [0,\infty)$ is said to be a \emph{jump kernel} of a  Dirichlet form $(\sE,\sF)$ on $L^2(M,\mu)$ of pure jump type, if $\wt J(dx, dy)= J(x,y) \mu (dx)\mu(dy)$, where $\wt{J}$ is the jumping measure.

 Let $\bR_+=[0,\infty)$. We say that a homeomorphism $\psi:\bR_+ \to \bR_+$ is a \emph{scale function} if there exist $C \ge 1,0< \beta_1 \le \beta_2$ such that
 \be \label{e:sf}
C^{-1} \left(\frac{R}{r} \right)^{\beta_1} \le \frac{\psi(R)}{\psi(r)} \le C \left(\frac{R}{r} \right)^{\beta_2}, \q \mbox{for all $0< r \le R$.}
 \ee
Let $\psi_j$ be a scale function and let $\wt J$ be a jumping measure. We say that the jumping measure $\wt J$ satisfies \hypertarget{jpsij}{$\on{J(\psi_j)}$} if there exists a density $J$ such that  $\wt J(dx,dy)=J(x,y)\mu(dx) \mu(dy)$ and there exists $C>0$ such that
\be \tag{$\on{J(\psi_j)}$} \label{jpsij}
\frac{C^{-1}}{\mu(B(x,d(x,y)))\psi_j(d(x,y))}  \le J(x,y) \le \frac{C}{\mu(B(x,d(x,y)))\psi_j(d(x,y))},
\ee
for $\mu$-a.e.~$x,y \in M \times M \setminus \on{diag}$.
If the density $J$ satisfies the upper or lower bounds on $J$ in the above estimate, we say that the jumping measure satisfies 
\hypertarget{jpsijle}{$\on{J(\psi_j)_\le}$} or \hypertarget{jpsijge}{$\on{J(\psi_j)_\ge}$} respectively.
Jump process satisfying \ref{jpsij} have been widely studied in the context of heat kernel estimates and Harnack inequalities \cite{CKW3,BKKL, BKKL1,MS}.

\subsection{Parabolic Harnack inequality} 

We recall the definition of parabolic Harnack inequality and it's relationship to heat kernel bounds. Let 
$(\sE,\sF)$ be a Dirichlet form on $L^2(M,\mu)$ and let $I$ be an open interval in $\bR$. We say that a function $u: I \to L^2(M,\mu)$ is weakly differentiable at $t_0 \in I$ if the function $t \mapsto \langle u(t), f \rangle$ is differentiable at $t_0$ for all $f \in L^2(M,\mu)$ , where $\langle \cdot, \cdot \rangle$ denotes the inner product in $L^2(M,\mu)$. By the   uniform boundedness principle, there exists a (unique) function $w \in L^2(M,\mu)$ such that
\[
\lim_{t \to t_0} \left\langle \frac{u(t)-u(t_0)}{t- t_0}, f \right\rangle = \langle w , f \rangle, \q \mbox{ for all $f \in L^2(M,\mu)$.}
\]
In this case, we say that the function $w$ above is the \emph{weak derivative} of   $u$ at $t_0$ and write $w=u'(t_0)$. 
 Let $\Omega$ be an open subset of $M$.
A function $u: I \to \sF$ is said to be \emph{caloric} in $I \times \Omega$ if  $u$ is weakly differentiable in the space $L^2(\Omega)$ at any $t \in I$, and for any $f \in \sF \cap C_{\mathrm{c}}(\Omega)$, and for any $t \in I$,
\be
\langle u', f \rangle + \sE(u,f)=0. \label{e:caloric}
\ee

\begin{definition}[Parabolic Harnack inequality] \label{d:harnack}
Let $(\sE,\sF)$ be a Dirichlet form on $L^2(M,\mu)$ and let $\psi$ be a scale function. 
We say that  an MMD space $(X,d,m,\sE,\sF)$ satisfies the \emph{parabolic Harnack inequality} with walk dimension $\beta$ (abbreviated as \hypertarget{phi}{$\on{PHI(\psi)}$}),
if there exist $0<C_1< C_2 < C_3 < C_4 <\infty$, $C_5>1$ and $\delta \in (0,1)$ such that for all $x \in X$, $r>0$ and for any non-negative bounded caloric function $u$ on the space-time cylinder $Q=(a,a+\psi(C_4 r)) \times B(x,r)$, we have
\begin{equation} \label{PHI} \tag*{$\on{PHI(\psi)}$}
\esssup_{Q_-} u \le C_5 \essinf_{Q_+} u,
\end{equation}
where $Q_-=(a+  \psi(C_1r),a+ \psi(C_2r)) \times B(x,\delta r)$ and $Q_+=(a+  \psi(C_3r),a+ \psi(C_4r) s) \times B(x,\delta r)$.
\end{definition}

We recall the following sub-Gaussian heat kernel estimate  that is equivalent to the above parabolic Harnack inequality.

Let $(\sE,\sF)$ be a strongly local, regular, Dirichlet form on $L^2(M,\mu)$ and let $\psi$ be a scale function.  We say that the Dirichlet form $(\sE,\sF)$  on  $L^2(M,\mu)$ satisfies
\hypertarget{hke}{$\on{HKE}(\psi)$}, if there exist $C_{1},c_{1},c_{2},c_{3},\delta\in(0,\infty)$
and a heat kernel $\set{p_t}_{t>0}$ such that for any $t>0$,
\begin{align}\label{e:uhke}
p_{t}(x,y) &\leq \frac{C_{1}}{m\bigl(B(x,\psi^{-1}(t))\bigr)} \exp \bigl( -c_{1} \Phi( c_{2}d(x,y), t ) \bigr)
\qquad \mbox{for $\mu$-a.e.\ $x,y \in M$,}\\
p_{t}(x,y) &\ge \frac{c_{3}}{m\bigl(B(x,\psi^{-1}(t))\bigr)}
\qquad \mbox{for $\mu$-a.e.\ $x,y\in M$ with $d(x,y) \leq \delta\psi^{-1}(t)$,}
\label{e:nlhke}
\end{align}
where
\begin{equation} \label{e:defPhi}
\Phi(R,t) := \Phi_{\psi}(R,t) := \sup_{r>0} \biggl(\frac{R}{r}-\frac{t}{\psi(r)}\biggr),
\qquad \mbox{for all $R \ge 0, t >0$.}
\end{equation}
We recall volume doubling and reverse volume doubling properties of a metric measure space.
We say that a metric measure space $(M,d,\mu)$ satisfies the \emph{volume doubling property} \hypertarget{vd}{$\operatorname{VD}$} if there exists $C_D>1$ such that\[  \tag*{$\on{VD}$}
\mu(B(x,2r))
 \le \mu(B(x,r)), \qq \mbox{for all $x \in M, r>0$.}\]
 We say that a metric measure space $(M,d,\mu)$  satisfies the \emph{reverse volume doubling property} \hypertarget{rvd}{$\operatorname{RVD}$}, if there exists $A,C>1$ such that
 \[
 \mu(B(x,Ar))
 \ge C \mu(B(x,r)),\qq \mbox{for all $x \in M, r>0$.}
 \]
 We recall the following well-known equivalence between parabolic Harnack inequality and heat kernel estimates. 
 \begin{theorem}\cite[Theorem 3.1]{BGK} \label{t:phi-hke}
 	Let $(M,d,\mu)$ be a metric measure space that satisfies the \hyperlink{vd}{$\operatorname{VD}$}, \hyperlink{rvd}{$\operatorname{RVD}$}  and let $\psi$ be a scale function. Then the parabolic Harnack inequality  \hyperlink{phi}{$\on{PHI(\psi)}$} is equivalent to the heat kernel estimate \hyperlink{hke}{$\on{HKE}(\psi)$}
 \end{theorem}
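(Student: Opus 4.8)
The plan is to prove the two implications separately; in both directions the central difficulty is that $\psi$ need not be a power, so the usual parabolic rescaling is unavailable and every estimate must be carried out using only the two-sided doubling control \eqref{e:sf} on $\psi$ together with \hyperlink{vd}{$\operatorname{VD}$} and \hyperlink{rvd}{$\operatorname{RVD}$}. It is convenient to factor both conditions through the intermediate quantitative hypotheses: the mean exit time estimate $\mathbb{E}_x[\tau_{B(x,r)}] \asymp \psi(r)$ (call it $\mathrm{E}_\psi$), a Faber--Krahn / Nash-type inequality at scale $\psi$ (call it $\mathrm{FK}(\psi)$), and the near-diagonal lower estimate $\mathrm{NDL}(\psi)$ asserting $p_t(x,y) \gtrsim 1/\mu(B(x,\psi^{-1}(t)))$ whenever $d(x,y) \le \delta\psi^{-1}(t)$.

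\emph{Direction \hyperlink{hke}{$\on{HKE}(\psi)$} $\Rightarrow$ \hyperlink{phi}{$\on{PHI(\psi)}$}.} From \eqref{e:nlhke} one reads off $\mathrm{NDL}(\psi)$ directly, and integrating it shows the process exits $B(x,r)$ with probability bounded away from $1$ by time $\asymp\psi(r)$; iterating this and combining with the estimate of $\mathbb{P}_x(\tau_{B(x,r)} \le t)$ obtained by integrating the Gaussian upper bound \eqref{e:uhke} yields $\mathrm{E}_\psi$. The upper bound \eqref{e:uhke} together with \hyperlink{vd}{$\operatorname{VD}$} gives $\mathrm{FK}(\psi)$. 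Given $\mathrm{E}_\psi$, $\mathrm{FK}(\psi)$ and $\mathrm{NDL}(\psi)$ I would then run the metric-measure-space version of the Moser/Krylov--Safonov iteration (an ``oil-spilling'' argument): $\mathrm{FK}(\psi)$ produces a parabolic mean-value bound for non-negative subcaloric functions on the appropriate $\psi$-cylinders, $\mathrm{E}_\psi$ controls the time needed for the relevant sublevel sets to spread across a ball, and chaining $\mathrm{NDL}(\psi)$ along a Harnack chain (whose length is controlled by \hyperlink{vd}{$\operatorname{VD}$}) upgrades the resulting weak Harnack inequality to the full \hyperlink{phi}{$\on{PHI(\psi)}$}.

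\emph{Direction \hyperlink{phi}{$\on{PHI(\psi)}$} $\Rightarrow$ \hyperlink{hke}{$\on{HKE}(\psi)$}.} Applying \hyperlink{phi}{$\on{PHI(\psi)}$} to space-time translates of a caloric function gives oscillation decay, hence parabolic H\"older continuity, so the heat kernel exists with a jointly continuous version. Applying \hyperlink{phi}{$\on{PHI(\psi)}$} to the caloric function $(t,y)\mapsto p_t(x,y)$, combined with the Chapman--Kolmogorov identity \eqref{e:ck} and \hyperlink{vd}{$\operatorname{VD}$} to generate the on-diagonal lower bound $p_t(x,x) \gtrsim 1/\mu(B(x,\psi^{-1}(t)))$, yields $\mathrm{NDL}(\psi)$, which is exactly \eqref{e:nlhke}. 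The mean-value inequality implied by \hyperlink{phi}{$\on{PHI(\psi)}$} gives the matching on-diagonal upper bound, and from the two on-diagonal bounds together with the mean-value inequality one extracts $\mathrm{E}_\psi$. It then remains to upgrade the on-diagonal upper bound to the full sub-Gaussian bound \eqref{e:uhke}: this I would do by an integrated-maximum-principle (Davies--Gaffney type) argument, testing the heat equation against a time-dependent cutoff weight comparable to $\exp(\lambda\, d(x,\cdot)/r)$, optimising the auxiliary parameters, and recognising the resulting exponent as $\Phi_\psi(c\,d(x,y),t)$ through the variational formula \eqref{e:defPhi}; equivalently, one iterates an ``escape from a ball'' estimate supplied by $\mathrm{E}_\psi$.

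The step I expect to be the main obstacle is precisely this last one, the sub-Gaussian off-diagonal upper bound, because controlling the error terms in the integrated maximum principle requires delicate convexity and monotonicity properties of $t\mapsto \Phi_\psi(R,t)$ and of $\psi^{-1}$ that are immediate when $\psi(r)=r^\beta$ but here must be derived solely from \eqref{e:sf}. A secondary, pervasive difficulty is that, with $\psi$ only comparable to powers $r^{\beta_1}$ and $r^{\beta_2}$ with possibly $\beta_1 < \beta_2$, the auxiliary inequalities ($\mathrm{FK}(\psi)$, the mean-value inequality, the exit-time bounds) hold only up to constants depending on $\beta_2/\beta_1$, and one must check that this loss does not compound through the iterations above.
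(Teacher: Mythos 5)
The paper does not actually prove this theorem: it cites it as \cite[Theorem~3.1]{BGK}, supplemented in the direction $\on{HKE}(\psi)\Rightarrow\on{PHI}(\psi)$ by \cite[Lemma~3.19]{GT12}, and in the direction $\on{PHI}(\psi)\Rightarrow\on{HKE}(\psi)$ by \cite[Theorem~4.2]{GT12} and \cite[Theorem~1.2]{GHL15}. These supplementary citations serve only to reconcile the different packagings of the auxiliary conditions (cutoff-Sobolev, generalized capacity, exit-time, near-diagonal lower bound) used in those three papers. So there is no argument in the paper for you to have matched; the authors simply recognize this as a known, deep equivalence and point the reader to it. Your proposal, by contrast, is a from-scratch sketch, and it does track the broad strategy of the cited literature: factor through $\mathrm{E}_\psi$, $\mathrm{FK}(\psi)$ and $\mathrm{NDL}(\psi)$; Moser/Krylov--Safonov iteration for the forward direction; chaining plus an integrated maximum principle with a $\psi$-adapted weight for the reverse. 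You also correctly flag the convexity and monotonicity of $\Phi_\psi$ as the delicate point when $\psi$ is merely doubling rather than a power. But each step you list is itself a theorem occupying tens of pages in \cite{BGK,GT12,GHL15}; as written, this is a table of contents, not a proof.

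Beyond the general lack of detail, one step in your reverse direction is a genuine logical gap rather than a deferred computation. You write: ``Applying \hyperlink{phi}{$\on{PHI(\psi)}$} to space-time translates of a caloric function gives oscillation decay, hence parabolic H\"older continuity, so the heat kernel exists with a jointly continuous version,'' and only afterwards derive the on-diagonal upper bound from the mean-value inequality. This ordering is circular. H\"older regularity of caloric functions does not, on its own, produce a function-valued kernel: one must first know that the semigroup is ultracontractive (equivalently, that the on-diagonal upper bound $p_t(x,x)\lesssim \mu(B(x,\psi^{-1}(t)))^{-1}$ holds, via $\on{PHI}\Rightarrow$ mean-value inequality $\Rightarrow$ Faber--Krahn/Nash) before $p_t(x,y)$ even makes sense as an element of $L^\infty$; only then can the H\"older estimate upgrade it to a continuous version. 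You should derive the on-diagonal upper bound \emph{before}, not after, asserting existence of a continuous kernel. Given the paper's own treatment, the pragmatic advice is to cite \cite{BGK,GT12,GHL15} as the paper does rather than attempt to reconstitute several hundred pages of prior work.
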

\proof
The implication  \hyperlink{hke}{$\on{HKE}(\psi)$} implies \hyperlink{phi}{$\on{PHI(\psi)}$} follows from \cite[Theorem 3.1]{BGK} along with \cite[Lemma 3.19]{GT12}. The converse implication follows from \cite[Theorem 3.1]{BGK}, \cite[Theorem 4.2]{GT12} along with \cite[Theorem 1.2]{GHL15}. We  refer the reader to \cite[Theorem 4.5 and Remark 4.6]{KM} for further discussion on related results. \qed

\subsection{Subordinator and L\'evy measure}
A subordinator $(S_t)$ is a non-decreasing L\'evy process with $S_0=0$; that is, $(S_t)$ has independent, stationary increments such that $t \mapsto S_t$ is continuous in probability. A subordinator is characterized by its \emph{Laplace exponent} $\phi: (0,\infty) \to [0,\infty)$ such that
\be \label{e:laplace-exp}
\bE e^{-\lam S_t} = e^{-t \phi(\lam)},
\ee
where $\phi$ is a \emph{Bernstein function} determined by its \emph{drift} $a \in [0,\infty)$ and \emph{L\'evy measure} of the subordinator $\nu$, where $\nu$ is a Borel measure on $(0,\infty)$ such that 
\be \label{e:levymeas}
\int_{(0,\infty)} (1 \wedge s)\, \nu(ds) <\infty, \quad \mbox{and} \quad \phi(\lambda)= a \lambda +\int_0^\infty (1-e^{-s \lam}) \,\nu(ds).
\ee
Conversely, any drift $a \in [0,\infty)$ and Borel measure $\nu$ on $(0,\infty)$ that satisfy \eqref{e:levymeas} uniquely determine the subordinator $(S_t)$.
We refer the reader to \cite[Chapter 5]{SSV} or \cite[Chapter 6]{Sat} for background on subordinators.\\
{\bf Notation}. In the following, we will use the notation  $A \lesssim B$ for quantities $A$ and $B$ to indicate the existence of an
implicit constant $C \ge 1$ depending on some inessential parameters such that $A \le CB$. We write $A \asymp B$, if $A \lesssim B$ and $B \lesssim A$.

\subsection{Statement of the main results}
We now state the main results of this work.
The following theorem characterizes all polyomial type jump kernels on a metric measure space that admits a diffusion satsfying parabolic Harnack inequality. Our theorem establishes a one-to-one correspondence between  jump processes with polynomial type jump kernels and processes with jump kernels comparable to that of subordinated diffusion process.
\begin{theorem}[Characterization of jump kernels] \label{t:char}
	Let $(M,d,\mu)$ a unbounded, complete, separable, locally compact metric measure space that satisfies \hyperlink{vd}{$\operatorname{VD}$}. 
	Let $(\sE^c,\sF^c)$ be a strongly local, regular Dirichlet form on $L^2(M,\mu)$ that satisfies \hyperlink{phi}{$\on{PHI(\psi_c)}$}, where $\psi_c$ is a scale function. Let $X$ be the $\mu$-symmetric Hunt process corresponding to $(\sE^c,\sF^c)$ on $L^2(M,\mu)$. Given a scale function $\psi_j$, the following are equivalent.
	\begin{enumerate}[(a)]
		\item There exists a  regular Dirichlet  form $(\sE^j,\sF^j)$ on $L^2(M,\mu)$ of pure jump type  whose jump kernel satisfies \hyperlink{jpsij}{$\on{J(\psi_j)}$}.
		\item There exists a subordinator $S$ such that the subordinated process $X \circ S$ corresponds to a regular Dirichlet form $(\sE^j,\sF^j)$ on $L^2(M,\mu)$  of pure jump type and  satisfies  \hyperlink{jpsij}{$\on{J(\psi_j)}$}.
		\item The scale function $\psi_j$ satisfies
		\be \label{e:crit}
		\int_0^1 \frac{\psi_c(s)}{ s \psi_j(s)} \, ds < \infty.
		\ee
	\end{enumerate}
\end{theorem}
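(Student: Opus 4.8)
The plan is to establish the cycle $(b)\Rightarrow(a)\Rightarrow(c)\Rightarrow(b)$. The implication $(b)\Rightarrow(a)$ is immediate from the definitions. Everything else rests on a single heat kernel computation, which I would carry out once. Write $p^c$ for the heat kernel of $(\sE^c,\sF^c)$, which exists and satisfies \hyperlink{hke}{$\on{HKE}(\psi_c)$} by Theorem \ref{t:phi-hke} (note that \hyperlink{phi}{$\on{PHI(\psi_c)}$} also forces \hyperlink{rvd}{$\operatorname{RVD}$} and conservativeness of $X$). The claim is that, \emph{regardless of whether \eqref{e:crit} holds}, the kernel
\[
J_0(x,y):=\int_0^\infty p^c_t(x,y)\,\frac{dt}{t\,\psi_j(\psi_c^{-1}(t))}
\]
satisfies $J_0(x,y)\asymp\bigl(\mu(B(x,d(x,y)))\,\psi_j(d(x,y))\bigr)^{-1}$ for $\mu$-a.e.\ $x,y$. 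Fixing $r=d(x,y)$ and splitting the $t$-integral at $\psi_c(r)$: the lower bound follows from the near-diagonal estimate \eqref{e:nlhke} restricted to $t\in[\psi_c(r/\delta),2\psi_c(r/\delta)]$, on which $\psi_c^{-1}(t)\asymp r$ and $\psi_j(\psi_c^{-1}(t))\asymp\psi_j(r)$ by \hyperlink{vd}{$\operatorname{VD}$} and \eqref{e:sf}; the range $t\ge\psi_c(r)$ is handled by a dyadic decomposition using \eqref{e:uhke}, where the volume factors grow at most geometrically in the dyadic index while $\psi_j(\psi_c^{-1}(t))$ grows geometrically faster, so the sum converges; and the range $t<\psi_c(r)$ is negligible because $\exp(-c_1\Phi(c_2 r,t))$ decays faster than any power of $r/\psi_c^{-1}(t)$ as $t\downarrow0$. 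I expect this two-regime sub-Gaussian estimate to be the main technical obstacle, though it is routine in flavour.

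For $(c)\Rightarrow(b)$, given \eqref{e:crit} I would set $\nu_0(dt):=dt/(t\,\psi_j(\psi_c^{-1}(t)))$ on $(0,\infty)$. A Lebesgue--Stieltjes change of variables $t=\psi_c(s)$ together with \eqref{e:sf} gives $\int_0^1 t\,\nu_0(dt)=\int_0^1 dt/\psi_j(\psi_c^{-1}(t))\asymp\int_0^{\psi_c^{-1}(1)}\psi_c(s)(s\psi_j(s))^{-1}\,ds<\infty$ by \eqref{e:crit}, while $\int_1^\infty\nu_0(dt)<\infty$ automatically; hence $\nu_0$ is a L\'evy measure and defines a subordinator $S$ with zero drift and Laplace exponent $\phi$. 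Since $X$ is conservative, the subordinate Dirichlet form of $X\circ S$ is
\[
\sE^j(f,f)=\int_0^\infty\langle f-P^c_t f,f\rangle\,\nu_0(dt)=\tfrac12\iint_{M\times M}(f(x)-f(y))^2\,J_0(x,y)\,\mu(dx)\mu(dy),
\]
so it is of pure jump type with jump kernel $J_0$, which satisfies \hyperlink{jpsij}{$\on{J(\psi_j)}$} by the computation above. Its regularity is inherited from $(\sE^c,\sF^c)$: the L\'evy condition forces $\phi(\lambda)\le C(1+\lambda)$, hence $\sE^j_1\le C\sE^c_1$ and $\sF^c\subseteq\sF^j$, while $P^c_s f\to f$ in the $\sE^j_1$-norm for $f\in\sF^j$, so that $\sF^c\cap C_{\mathrm c}(M)$ is a core for $\sE^j$. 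This yields $(b)$.

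For $(a)\Rightarrow(c)$ I would argue by contradiction. Suppose $(a)$ holds, witnessed by a pure jump regular Dirichlet form $(\sE^j,\sF^j)$ with jump kernel $J$ satisfying \hyperlink{jpsij}{$\on{J(\psi_j)}$}. For any $f\in L^2(M,\mu)$, the lower bound in \hyperlink{jpsij}{$\on{J(\psi_j)}$}, the estimate $J\gtrsim J_0$, and the identity $\iint(f(x)-f(y))^2 J_0(x,y)\,\mu(dx)\mu(dy)=2\int_0^\infty\langle f-P^c_t f,f\rangle\,\nu_0(dt)$ give $\sE^j(f,f)\gtrsim\int_0^\infty\langle f-P^c_t f,f\rangle\,\nu_0(dt)$. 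By regularity, $\sF^j\cap C_{\mathrm c}(M)$ is dense in $C_{\mathrm c}(M)$, so I may choose $f\in\sF^j\cap C_{\mathrm c}(M)$ with $\sE^c(f,f):=\lim_{t\downarrow0}t^{-1}\langle f-P^c_t f,f\rangle>0$ (the limit lies in $(0,\infty]$ by \eqref{e:df2}--\eqref{e:df3}, and such $f$ exists because $(\sE^c,\sF^c)$ is irreducible, so $\sE^c(g,g)=0$ forces $g=0$). Since $t\mapsto t^{-1}\langle f-P^c_t f,f\rangle$ is non-increasing, $\langle f-P^c_t f,f\rangle\ge c_f t$ for all small $t$ with $c_f>0$; hence, if \eqref{e:crit} failed,
\[
\int_0^\infty\langle f-P^c_t f,f\rangle\,\nu_0(dt)\;\gtrsim_f\;\int_0^{t_0}t\,\nu_0(dt)\;=\;\int_0^{t_0}\frac{dt}{\psi_j(\psi_c^{-1}(t))}\;\asymp\;\int_0^{\psi_c^{-1}(t_0)}\frac{\psi_c(s)}{s\,\psi_j(s)}\,ds\;=\;\infty,
\]
forcing $\sE^j(f,f)=\infty$ and contradicting $f\in\sF^j$. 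This closes the cycle and proves the equivalence.
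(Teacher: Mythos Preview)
Your proposal is correct and follows essentially the same route as the paper: the key kernel estimate $J_0(x,y)\asymp(\mu(B(x,d(x,y)))\psi_j(d(x,y)))^{-1}$ is exactly the paper's Lemma~\ref{l:levyjump} (quoted from \cite{BKKL}), the construction of the subordinator from the L\'evy measure $\nu_0(dt)=dt/(t\,\psi_j(\psi_c^{-1}(t)))$ together with conservativeness and Okura's identification of the subordinate jump kernel matches the paper's $(c)\Rightarrow(b)$, and your change-of-variables for the integrability condition is Lemma~\ref{l:integral}. The only cosmetic difference is in $(a)\Rightarrow(c)$: the paper shows that failure of \eqref{e:crit} forces $\sE^c(f,f)=0$ for \emph{every} $f\in\sF^j$ and then invokes the Poincar\'e inequality to contradict density, whereas you pick a single nonzero $f\in\sF^j\cap C_{\mathrm c}(M)$ and use irreducibility (equivalently Poincar\'e plus connectedness and $\mu(M)=\infty$) to guarantee $\sE^c(f,f)>0$; these are contrapositive rearrangements of the same argument.
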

\begin{remark} \label{r:prev} {\rm 
	By \cite[Corollary 1.10]{Mur}, we have $\psi_c(s) \lesssim s^2$ for all $s \in [0,1]$. Therefore the condition
	\be \label{e:suff}
	\int_0^1 \frac{s}{\psi_j(s)} \,ds < \infty
	\ee
	implies \eqref{e:crit}.
The above sufficient condition \eqref{e:suff} for \eqref{e:crit} was assumed in the context of jump processes on $d$-regular sets in the Euclidean space \cite[eq. (1.3)]{CK}. Furthermore, the Brownian motion on Euclidean space satisfies \hyperlink{phi}{$\on{PHI(\psi_c)}$} with $\psi_c(r)=r^2$ in which case \eqref{e:crit} is same as \eqref{e:suff}. The integrablity condition \eqref{e:crit} was recently used to obtain heat kernel estimates in \cite[eq. (2.19)]{BKKL}. Theorem \ref{t:char} can be viewed as a justification for the assumptions in these previous works. Although these versions of \eqref{e:suff} were used in earlier works to obtain heat kernel bounds and parabolic Harnack inequality, the necessity of \eqref{e:crit} is new and is the key contribution of our work. 
}
\end{remark}

\begin{cor}[Parabolic Harnack inequality via subordination] \label{c:main}
		Let $(M,d,\mu)$ a unbounded, complete, separable, locally compact metric measure space that satisfies \hyperlink{vd}{$\operatorname{VD}$}. 
	Let $(\sE^c,\sF^c)$ be a regular, strongly local Dirichlet form on $L^2(M,\mu)$ that satisfies \hyperlink{phi}{$\on{PHI(\psi_c)}$}, where $\psi_c$ is a scale function. Let $(\sE^j,\sF^j)$ be a regular Dirichlet form  $L^2(M,\mu)$ of pure jump type that satisfies \ref{jpsij} for some scale function $\psi_j$.
	
	\begin{enumerate}[(a)]
		\item 
 Then  jump type Dirichlet form $(\sE^j,\sF^j)$ satisfies \hyperlink{phi}{$\on{PHI(\wh \psi_j)}$}, where $\wh \psi_j$ is a scale function satisfying the following estimate: there exists $C \ge  1$ such that
	\[
	 C^{-1}\frac{\psi_c(r)}{\int_0^r   \frac{\psi_c(s)}{ s \psi_j(s)} \, ds }  \le  \wh \psi_j(r) \le C \frac{\psi_c(r)}{\int_0^r   \frac{\psi_c(s)}{ s \psi_j(s)} \, ds } \q \mbox{for all $r>0$.}
	\]	
\item The scale functions $\psi_c,\psi_j, \wh \psi_j$ satisfy the following estimates:
\begin{align} \label{e:cor1}
\wh \psi_j(r) &\lesssim \psi_j(r) \q \mbox{for all $r >0$,}\\
\psi_c(r) & \lesssim \psi_j(r) \q \mbox{for all $r \le 1$,} \label{e:cor2}\\
\frac{\wh \psi_j(R)}{\wh \psi_j(r)} &\lesssim \frac{\psi_c(R)}{\psi_c(r)} \q \mbox{for all $0< r \le  R$,} \label{e:cor3}\\
\wh \psi_j(r) \lesssim \psi_c(r) \q \mbox{for all $r \ge 1$,} &\mbox{  and }
\psi_c(r) \lesssim   \wh \psi_j(r)\q \mbox{for all $r \le 1$.} \label{e:cor4}
\end{align}
		\end{enumerate}
\end{cor}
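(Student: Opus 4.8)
The plan is to derive Corollary \ref{c:main} from Theorem \ref{t:char} together with the equivalence of \ref{PHI} and \ref{hke} in Theorem \ref{t:phi-hke}, and from the stability results of Chen--Kumagai--Wang \cite{CKW2}. The first observation is that the hypotheses of Corollary \ref{c:main} put us exactly in the situation of Theorem \ref{t:char}: we are given a jump kernel satisfying \ref{jpsij}, so statement (a) of Theorem \ref{t:char} holds, hence so does statement (c), namely the integrability $\int_0^1 \psi_c(s)/(s\psi_j(s))\,ds<\infty$. This in particular makes the integral $\int_0^r \psi_c(s)/(s\psi_j(s))\,ds$ finite for every $r>0$, so that the proposed formula for $\wh\psi_j$ is well defined. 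The first substantive step is therefore to verify that $\wh\psi_j(r):= \psi_c(r)\big/\int_0^r \psi_c(s)/(s\psi_j(s))\,ds$ is indeed (comparable to) a scale function, i.e. that it is a homeomorphism of $\bR_+$ satisfying the two-sided power bounds \eqref{e:sf}. For the upper bound on $\wh\psi_j(R)/\wh\psi_j(r)$ one writes it as $\tfrac{\psi_c(R)}{\psi_c(r)}\cdot\tfrac{\int_0^r\psi_c/(s\psi_j)\,ds}{\int_0^R\psi_c/(s\psi_j)\,ds}$; the first factor is controlled by \eqref{e:sf} for $\psi_c$, and the second factor is $\le 1$ since the integrand is nonnegative, which already gives \eqref{e:cor3}. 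For the lower power bound one needs the integrand $\psi_c(s)/(s\psi_j(s))$ to be (up to constants) monotone-comparable, which follows from the power bounds \eqref{e:sf} for $\psi_c$ and $\psi_j$ via a standard doubling/summation-over-dyadic-scales argument.

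The second step is to produce the subordinator. By Theorem \ref{t:char}, condition (c) $\Rightarrow$ (b) yields a subordinator $S$ such that $X\circ S$ is a pure-jump Dirichlet form whose jump kernel satisfies \ref{jpsij}; since $(\sE^j,\sF^j)$ also satisfies \ref{jpsij}, the two jump kernels are comparable. The third step transfers the Harnack inequality: the diffusion $X$ satisfies \ref{PHI} hence, by Theorem \ref{t:phi-hke}, \ref{hke} with $\psi_c$; the classical subordination formula \eqref{e:subjump} lets one transfer the sub-Gaussian heat kernel bounds through the subordinator, computing the time scale of $X\circ S$ from the asymptotics of the Laplace exponent $\phi$ of $S$, which produces exactly $\wh\psi_j$ as the walk function (the standard computation, via the Bernstein/L\'evy measure relation \eqref{e:levymeas}, identifies $1/\phi(1/\psi_c(r))$ with $\wh\psi_j(r)$ up to constants). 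This establishes the two-sided heat kernel estimate $\mathrm{HKJ}(\wh\psi_j)$ — in the sense appropriate for jump processes, i.e. upper bound with the jump-kernel tail and lower bound of near-diagonal/polynomial type — for $X\circ S$. Finally, since $(\sE^j,\sF^j)$ has jump kernel comparable to that of $X\circ S$ and \ref{VD} holds, the stability theorem of \cite{CKW2} (\ref{PHI} for jump processes is stable under comparable jump kernels, given \ref{VD}) transfers \ref{PHI($\wh\psi_j$)} from $X\circ S$ to $(\sE^j,\sF^j)$, proving part (a).

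For part (b), the four displayed estimates are elementary consequences of the formula for $\wh\psi_j$ in part (a) together with \eqref{e:sf}. Estimate \eqref{e:cor3} is already obtained above. For \eqref{e:cor1}, write $\wh\psi_j(r)/\psi_j(r)= \psi_c(r)/\big(\psi_j(r)\int_0^r \psi_c(s)/(s\psi_j(s))\,ds\big)$ and bound the integral below by integrating over $[r/2,r]$, on which $\psi_c(s)/(s\psi_j(s))\gtrsim \psi_c(r)/(r\psi_j(r))$ by \eqref{e:sf}, giving $\int_0^r \gtrsim \psi_c(r)/\psi_j(r)$ and hence $\wh\psi_j(r)\lesssim\psi_j(r)$. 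Estimate \eqref{e:cor2} is exactly the bound $\psi_c(s)\lesssim s^2 \lesssim \psi_j(s)$ noted in Remark \ref{r:prev} — more precisely one uses $\psi_c(s)\lesssim s^2$ from \cite[Corollary 1.10]{Mur} and the fact that condition (c), being finite, forces $s/\psi_j(s)$ to be integrable near $0$, whence $\psi_j(s)\gtrsim \psi_c(s)$ for $s\le 1$ after invoking \eqref{e:sf}; this also gives the second half of \eqref{e:cor4}. The first half of \eqref{e:cor4}, $\wh\psi_j(r)\lesssim\psi_c(r)$ for $r\ge 1$, follows from the formula together with the fact that for $r\ge 1$ the integral $\int_0^r\psi_c(s)/(s\psi_j(s))\,ds \ge \int_0^1 \psi_c(s)/(s\psi_j(s))\,ds$ is bounded below by a positive constant.

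I expect the main obstacle to be the careful transfer of heat kernel estimates through subordination in the third step: one must track how the sub-Gaussian off-diagonal term $\exp(-c\Phi_{\psi_c}(d(x,y),t))$ of the diffusion turns into the polynomial jump-kernel tail $\asymp \mu(B(x,d(x,y)))^{-1}\wh\psi_j(d(x,y))^{-1}\cdot t$ of the subordinated process, and verify that the resulting two-sided bounds are precisely the ones feeding into the Chen--Kumagai--Wang stability machinery. This is where the asymptotics of the Laplace exponent, the relation between $\nu$ and $\psi_j$ forced by \ref{jpsij}, and the definition \eqref{e:defPhi} of $\Phi$ all have to be reconciled; the rest of the argument is bookkeeping with the scale-function inequalities \eqref{e:sf}.
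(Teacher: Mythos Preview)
Your overall plan for part (a) is sound and aligns with the paper's strategy: use Theorem~\ref{t:char} to obtain \eqref{e:crit}, then obtain heat kernel estimates for the jump process at scale $\wh\psi_j$, and finally invoke the stability/characterization results of \cite{CKW2}. The paper shortcuts your ``third step'' by directly citing \cite[Theorem~2.19 and Lemma~4.5]{BKKL}, which already packages the subordination computation and delivers the jump-type heat kernel estimate with scale $\wh\psi_j$; you instead propose to redo that computation by hand via the Laplace exponent of the subordinator. Either route works, but carrying out yours amounts to reproving part of \cite{BKKL}.

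There is, however, a genuine gap in your argument for \eqref{e:cor2}. You claim that condition \eqref{e:crit} ``forces $s/\psi_j(s)$ to be integrable near $0$'' after using $\psi_c(s)\lesssim s^2$. But the inequality $\psi_c(s)\lesssim s^2$ gives $\psi_c(s)/(s\psi_j(s))\lesssim s/\psi_j(s)$, which points the wrong way: finiteness of $\int_0^1 \psi_c(s)/(s\psi_j(s))\,ds$ does \emph{not} imply finiteness of $\int_0^1 s/\psi_j(s)\,ds$ (indeed \eqref{e:suff} is strictly stronger than \eqref{e:crit} in general). The correct argument, which the paper gives, is immediate from the lower bound you already used for \eqref{e:cor1}: for $r\le 1$,
\[
\frac{\psi_c(r)}{\psi_j(r)} \;\lesssim\; \int_{r/2}^r \frac{\psi_c(s)}{s\,\psi_j(s)}\,ds \;\le\; \int_0^1 \frac{\psi_c(s)}{s\,\psi_j(s)}\,ds \;<\; \infty,
\]
so $\psi_c(r)\lesssim \psi_j(r)$ with a constant independent of $r\le 1$. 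Your derivation of the second half of \eqref{e:cor4} is entangled with the same faulty step; the clean route (used in the paper) is simply to specialize \eqref{e:cor3}: setting $R=1$ gives $\wh\psi_j(1)/\wh\psi_j(r)\lesssim \psi_c(1)/\psi_c(r)$ for $r\le 1$, hence $\psi_c(r)\lesssim\wh\psi_j(r)$, and setting $r=1$ gives $\wh\psi_j(R)\lesssim\psi_c(R)$ for $R\ge 1$. Your arguments for \eqref{e:cor1} and \eqref{e:cor3} are correct and coincide with the paper's.
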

We provide a probabilistic interpretation of the Corollary \ref{c:main}(b).
Let $(X_t)_{t \ge 0}, (Y_t)_{t \ge 0}$ denote the diffusion and jump processes corresponding to the Dirichlet forms $(\sE^c,\sF^c)$ and $(\sE^j,\sF^j)$ in Corollary \ref{c:main} respectively. Then by the results of \cite{GHL15,GT12,CKW2}, the function  $\psi_c$ and $\wh \psi_j$  govern the exit times from balls of the processes $X$ and $Y$ respectively.
In particular, the following two sided bounds for exit times hold: 
\[
\bE_x [\tau^X_{B(x,r)}] \asymp  \psi_c(r),\qq \bE_x [\tau^Y_{B(x,r)}] \asymp  \wh \psi_j(r)
\]
for all $x \in M, r>0$, where $\bE_x$ denote the expectation when the process starts at $x$ and $\tau^X_{B(x,r)},\tau^Y_{B(x,r)}$ corresponds to the exit time of $B(x,r)$ for the process $X$ and $Y$ respectively. 
By the last estimate in Corollary \ref{c:main}(b), the diffusion process exits smaller balls (say balls of radii less than $1$) faster than the jump process. On the other hand, the jump process exits larger balls faster than the diffusion process.  A similar assumption can be found in \cite[(1.13)]{CKW3}. This work grew from an attempt to understand and justify the above mentioned assumptions in \cite{CKW3, BKKL, CK}.
\section{Subordinator with comparable jump kernel}
We recall the following result from \cite{BKKL}.
We emphasize that  the following result of does not require $\psi_c$ and $\psi_j$ to satisfy condition \eqref{e:crit}. 
In the notation of  \cite{BKKL}, we only need that $\mathcal{J}_\psi$ is defined by the equation that is four lines above the statement of \cite{BKKL}.
\begin{lem} \cite[Lemma 4.2]{BKKL} \label{l:levyjump}
	Let $\psi_c, \psi_j$ be scale functions and
let $(t,x,y) \mapsto p^c_t(x,y)$ be a heat kernel that satisfies the estimate \hyperlink{hke}{$\on{HKE}(\psi_c)$}. Then
	\be
	\label{e:jump}
	\int_0^\infty \frac{p^c_t(x,y)}{t \psi_j \circ \psi_c^{-1}(t)}\,dt  \asymp \frac{1}{V(x,d(x,y)) \psi_j(d(x,y))} \qq \mbox{for all $x,y \in M$.}
	\ee
\end{lem}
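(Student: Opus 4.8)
The plan is to estimate the integral $\int_0^\infty \frac{p^c_t(x,y)}{t\,\psi_j\circ\psi_c^{-1}(t)}\,dt$ by splitting the range of integration at $t_0 := \psi_c(d(x,y))$, and using the two regimes of the sub-Gaussian heat kernel estimate \hyperlink{hke}{$\on{HKE}(\psi_c)$} on each piece. Write $R = d(x,y)$ and $V(x,r) = \mu(B(x,r))$ throughout; note $\psi_c$ and $\psi_j$ are homeomorphisms of $\bR_+$, so the change of variables $t = \psi_c(s)$ (equivalently $s = \psi_c^{-1}(t)$) is legitimate and converts $\psi_j\circ\psi_c^{-1}(t)$ into $\psi_j(s)$.

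First, for the \emph{near-diagonal regime} $t \ge t_0$: here $R \le \psi_c^{-1}(t)$, so the lower bound \eqref{e:nlhke} gives $p^c_t(x,y) \gtrsim V(x,\psi_c^{-1}(t))^{-1}$, while the upper bound \eqref{e:uhke} gives $p^c_t(x,y) \lesssim V(x,\psi_c^{-1}(t))^{-1}$ (the exponential factor is $\le 1$). Hence on $[t_0,\infty)$ the integrand is comparable to $\frac{1}{V(x,\psi_c^{-1}(t))\,t\,\psi_j\circ\psi_c^{-1}(t)}$; substituting $t = \psi_c(s)$ and using $\frac{dt}{t} \asymp \frac{ds}{s}$ (a consequence of the scale-function doubling bounds \eqref{e:sf}, which force $\psi_c(s)\asymp s\psi_c'(s)$ in an integrated sense) reduces this to $\int_R^\infty \frac{ds}{V(x,s)\,s\,\psi_j(s)}$. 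Using \hyperlink{vd}{$\operatorname{VD}$} — which gives $V(x,s) \gtrsim V(x,R)(s/R)^{0}$ with a genuine lower polynomial growth exponent once combined with the lower bound $\psi_j(s)/\psi_j(R) \gtrsim (s/R)^{\beta_1}$ from \eqref{e:sf} — the tail integral $\int_R^\infty \frac{ds}{V(x,s) s \psi_j(s)}$ is dominated by a convergent geometric-type sum and is $\asymp \frac{1}{V(x,R)\psi_j(R)}$, which is exactly the claimed right-hand side.

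Second, for the \emph{off-diagonal regime} $0 < t < t_0$: here only the upper bound \eqref{e:uhke} is available, and $\Phi(c_2 R, t)$ is large, providing exponential decay. The key quantitative fact is that $\Phi_{\psi_c}(R,t) \gtrsim$ (a positive power of) $\psi_c(R)/t$ when $t \le \psi_c(R)$ — this follows by choosing $r$ appropriately in the supremum defining $\Phi$ in \eqref{e:defPhi} and using the polynomial bounds \eqref{e:sf} on $\psi_c$ (this is the standard estimate $\Phi(R,t) \asymp \sup_r(R/r - t/\psi_c(r))$ behaving like $(\psi_c(R)/t)^{1/(\beta_2-1)}$). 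Combined with volume doubling to control $V(x,\psi_c^{-1}(t))$ against $V(x,R)$ at the cost of a polynomial factor in $R/\psi_c^{-1}(t)$, the exponential gain beats all polynomial losses, so $\int_0^{t_0} \frac{p^c_t(x,y)}{t\psi_j\circ\psi_c^{-1}(t)}\,dt \lesssim \frac{1}{V(x,R)\psi_j(R)}\int_0^{t_0}(\text{rapidly decaying in } t_0/t)\,\frac{dt}{t} \lesssim \frac{1}{V(x,R)\psi_j(R)}$; this regime contributes only to the upper bound and is harmless.

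The main obstacle is the \emph{bookkeeping of polynomial factors} in the off-diagonal regime: one must carefully track how $V(x,\psi_c^{-1}(t))$, the factor $\psi_j\circ\psi_c^{-1}(t)$, and the Gaussian exponent $\exp(-c_1\Phi(c_2 R,t))$ all scale as $t \downarrow 0$, and verify that the exponential decay in $\psi_c(R)/t$ genuinely dominates the product of all polynomial corrections coming from \eqref{e:sf} and \hyperlink{vd}{$\operatorname{VD}$}; this requires a uniform lower bound on $\Phi$ that is itself a (possibly small) power of $\psi_c(R)/t$, which is the delicate point since $\beta_2$ could be close to $1$ in principle — though for diffusions $\beta_1 \ge 2$ by \cite[Corollary 1.10]{Mur}, so in fact there is ample room. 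The near-diagonal analysis and the reduction of the tail integral to $\frac{1}{V(x,R)\psi_j(R)}$ via a dyadic decomposition is routine once the change of variables is set up.
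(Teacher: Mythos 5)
The paper does not reprove this lemma; it cites it directly from \cite[Lemma 4.2]{BKKL}, and your sketch reproduces the standard argument for estimates of this type, so there is no ``paper's proof'' to diverge from. Your decomposition at $t_0=\psi_c(d(x,y))$ and the subsequent estimates are essentially sound, but two small points should be tightened. First, the near-diagonal lower bound \eqref{e:nlhke} applies only when $d(x,y)\le\delta\psi_c^{-1}(t)$, not merely $d(x,y)\le\psi_c^{-1}(t)$; thus the lower bound on the tail integral should be taken over $t\ge\psi_c(d(x,y)/\delta)$. Since $\psi_c(R/\delta)\asymp\psi_c(R)$ by \eqref{e:sf}, this only shifts the split point by a constant and is harmless, but as written your claim that ``$R\le\psi_c^{-1}(t)$ gives \eqref{e:nlhke}'' is not quite right. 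Second, the concern you raise about the exponent $1/(\beta_2-1)$ degenerating when $\beta_2$ is near $1$ is a non-issue and does not need \cite[Corollary 1.10]{Mur}: simply taking $r=\psi_c^{-1}(t)$ in the supremum defining $\Phi$ in \eqref{e:defPhi} gives $\Phi(c_2R,t)\ge c_2R/\psi_c^{-1}(t)-1$, and for $t\lesssim\psi_c(R)$ this is $\gtrsim R/\psi_c^{-1}(t)\gtrsim(\psi_c(R)/t)^{1/\beta_2}$ by \eqref{e:sf}, a positive power that already suffices to swamp the polynomial factors coming from \hyperlink{vd}{$\operatorname{VD}$}, from $1/\psi_j(\psi_c^{-1}(t))$, and from $1/t$. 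With those adjustments your near-diagonal dyadic summation and off-diagonal exponential-versus-polynomial comparison yield exactly the two-sided bound in \eqref{e:jump}.
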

 The following Poincar\'e inequality follows from the parabolic Harnack inequality and is a crucial ingredient in our proof.
\begin{lem} [Poincar\'e inequality]\label{l:poin} \cite[Theorem 1.2]{GHL15}
Let $(M,d,\mu)$ an unbounded, complete, separable metric measure space. 
Let $(\sE^c,\sF^c)$ be a regular, strongly local Dirichlet form on $L^2(M,\mu)$ that satisfies \hyperlink{phi}{$\on{PHI(\psi_c)}$}, where $\psi_c$ is a scale function. Then we have the following Poincar\'e inequality: there exist $C_P, A>1$ such that for any ball $B(x,r)$ and for any function $f \in \sF^c$, 
\be \label{poin} \tag{$\on{PI(\psi_c)}$}
\int_{B(x,r)} (f(y)-f_{B(x,r)})^2 \, \mu(dy) \le C_P \int_{B(x,Ar)} \int_{B(x,Ar)} d \Gamma(f,f),
\ee
where $\Gam(f,f)$ denotes the energy measure, and $f_{B(x,r)}$ denotes the $\mu$-average of $f$ in $B(x,r)$ defined by $f_{B(x,r)} = \frac{1}{\mu(B(x,r))} \int_{B(x,r)} f \, d \mu$.
\end{lem}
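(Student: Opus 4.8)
Since the statement is quoted verbatim from \cite[Theorem 1.2]{GHL15}, the honest ``proof'' is a pointer to that reference; below I outline the route by which one would reconstruct it from the tools already assembled in this section.

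\emph{Step 1 (reduction to heat kernel bounds).} The plan is first to convert \hyperlink{phi}{$\on{PHI(\psi_c)}$} into matching two-sided heat kernel estimates. The hypothesis \hyperlink{phi}{$\on{PHI(\psi_c)}$} for a strongly local, regular Dirichlet form forces \hyperlink{vd}{$\operatorname{VD}$}, and together with completeness, unboundedness and the connectedness implied by \hyperlink{phi}{$\on{PHI(\psi_c)}$} it also forces \hyperlink{rvd}{$\operatorname{RVD}$}, so Theorem \ref{t:phi-hke} applies and yields \hyperlink{hke}{$\on{HKE(\psi_c)}$}. Fixing $r>0$ and $t:=\psi_c(\delta r)$ with $\delta$ as in \eqref{e:nlhke}, this gives the near-diagonal lower bound $p_t(y,z)\gtrsim \mu(B(x,r))^{-1}$ for $\mu$-a.e.\ $y,z\in B(x,r)$, together with the sub-Gaussian upper bound \eqref{e:uhke}; and, separately, $\langle f-P_tf,f\rangle\le t\,\sE^c(f,f)$ for all $f\in\sF^c$ by the spectral theorem and \eqref{e:df2}--\eqref{e:df3}.

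\emph{Step 2 (heat-kernel averaging and chaining).} For $f\in\sF^c$ and $B=B(x,r)$, I would split
\[
\int_B (f-f_B)^2\,d\mu \;\le\; 2\int_B (f-P_tf)^2\,d\mu \;+\; 2\int_B (P_tf-f_B)^2\,d\mu .
\]
The first term is $\le 2\langle f-P_tf,\,f-P_tf\rangle\le 2\langle f-P_tf,f\rangle\le 2t\,\sE^c(f,f)\asymp \psi_c(r)\,\sE^c(f,f)$, using $(1-e^{-t\lambda})^2\le 1-e^{-t\lambda}\le t\lambda$ spectrally. For the second term, the two-sided bounds from Step 1 say that $p_t(y,\cdot)$ is comparable to $\mu(B)^{-1}$ on a fixed fraction of $B$ and is integrably small far away, so $P_tf(y)-f_B$ can be written as a weighted average of increments $f(z_1)-f(z_2)$ along chains of overlapping balls of smaller radius joining $y$ to the centre $x$; estimating each link by a Poincar\'e inequality at a smaller scale (an iteration over dyadic scales) and summing the geometric series, with \hyperlink{vd}{$\operatorname{VD}$} controlling the number and sizes of the links, yields $\int_B(P_tf-f_B)^2\,d\mu\lesssim \psi_c(r)\int_{B(x,Ar)}d\Gamma(f,f)$ for a suitable $A>1$.

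\emph{Main obstacle.} The delicate point is \emph{localization}: the crude estimate $\langle f-P_tf,f\rangle\le t\,\sE^c(f,f)$ involves the \emph{global} form, whereas \eqref{poin} must be bounded by the energy measure on the finite ball $B(x,Ar)$. I would handle this by replacing $P_t$ with the part process on $B(x,Ar)$, i.e.\ the Dirichlet heat kernel $p^{B(x,Ar)}_t$: on the short time scale $t=\psi_c(\delta r)$ this inherits the near-diagonal lower bound on a ball of comparable radius from \hyperlink{phi}{$\on{PHI(\psi_c)}$} (comparing the killed and free semigroups), while $\langle f-P^{B(x,Ar)}_tf,f\rangle$ is controlled by the energy of $f$ localized to $B(x,Ar)$ through a cutoff function and the strong locality of $\sE^c$. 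Assembling Steps 1--2 with this localized semigroup then gives \eqref{poin}; for the full details we refer to \cite{GHL15}.
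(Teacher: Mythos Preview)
Your proposal is correct and matches the paper's approach: the paper's proof is literally the one-line citation ``This is an immediate consequence of \cite[Theorem 1.2]{GHL15} and Theorem \ref{t:phi-hke},'' which is exactly the route you announce in your first sentence and Step~1. Your Steps~2--3 sketching the internal argument of \cite{GHL15} are supplementary and go beyond what the paper itself provides.
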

\begin{proof}
	This is an immediate consequence of \cite[Theorem 1.2]{GHL15} and Theorem \ref{t:phi-hke}.
\end{proof}

The following lemma is classical and is a special case of \cite[Theorem 2.1]{Oku}
\begin{lem} \cite[Theorem 2.1]{Oku} \label{l:subjump} Let $X$ be a $\mu$-symmetric process with a conservative semigroup whose heat kernel is $p_t^c(\cdot,\cdot)$, and $(S_t)$ be a subordinator with L\'evy measure $\nu$. Then the Dirichlet form corresponding to the $\mu$-symmetric subordinated process $Y_t= X_{S_t}$ is a pure jump process with jump kernel
	\[
	J(x,y) = \frac 1 2 \int_0^\infty p^c_t(x,y) \,\nu(dt).
	\]
\end{lem}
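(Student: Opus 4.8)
The plan is to identify the Dirichlet form of the subordinated process directly from the semigroup characterization \eqref{e:df1}--\eqref{e:df2} using the spectral theorem, and then recognize it as a pure-jump form with the stated kernel. Since $(S_t)$ is independent of $X$, the Markov semigroup of $Y_t = X_{S_t}$ is the Bochner-subordinated semigroup $P^Y_t f = \int_0^\infty P^c_s f\,\eta_t(ds)$, where $\eta_t$ denotes the law of $S_t$; it is $\mu$-symmetric because each $P^c_s$ is, and conservativeness of $(P^c_t)$ together with $\int_{(0,\infty)}\eta_t(ds)=1$ makes $(P^Y_t)$ conservative as well. Writing $\{E_\lambda\}_{\lambda\ge 0}$ for the spectral resolution of the nonnegative self-adjoint operator $-\sL^c$ and using $\bE e^{-\lambda S_t}=e^{-t\phi(\lambda)}$ from \eqref{e:laplace-exp}, Tonelli's theorem gives $\langle P^Y_t f, f\rangle = \int_{[0,\infty)} e^{-t\phi(\lambda)}\,d\langle E_\lambda f, f\rangle$, hence $\langle f - P^Y_t f, f\rangle = \int_{[0,\infty)}\bigl(1-e^{-t\phi(\lambda)}\bigr)\,d\langle E_\lambda f, f\rangle$.

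Since $t\mapsto (1-e^{-t\phi(\lambda)})/t$ increases to $\phi(\lambda)$ as $t\downarrow 0$, monotone convergence combined with \eqref{e:df1}--\eqref{e:df2} yields $\sF^Y = \{f\in L^2(M,\mu): \int_{[0,\infty)}\phi(\lambda)\,d\langle E_\lambda f, f\rangle<\infty\}$ and $\sE^Y(f,f)=\int_{[0,\infty)}\phi(\lambda)\,d\langle E_\lambda f, f\rangle$. Now I would insert the Bernstein representation $\phi(\lambda)=a\lambda+\int_{(0,\infty)}(1-e^{-s\lambda})\,\nu(ds)$ from \eqref{e:levymeas}, which splits $\sE^Y(f,f)=a\,\sE^c(f,f)+\int_{(0,\infty)}\langle f-P^c_s f, f\rangle\,\nu(ds)$; the first term is the strongly local part, so the form is of pure jump type exactly when the drift $a$ vanishes, which is the relevant case for Theorem \ref{t:char}. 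For the second term I would use the heat kernel together with conservativeness: since $\int_M p^c_s(x,y)\,\mu(dy)=1$ and $p^c_s$ is symmetric, a symmetrization in $(x,y)$ gives the pointwise identity $\langle f-P^c_s f, f\rangle = \tfrac12\int_{M\times M}(f(x)-f(y))^2 p^c_s(x,y)\,\mu(dx)\,\mu(dy)$. As the integrand is nonnegative, Tonelli's theorem interchanges the $\nu(ds)$-integral with the $\mu\times\mu$-integral and produces $\int_{M\times M\setminus\diag}(f(x)-f(y))^2\bigl(\tfrac12\int_0^\infty p^c_s(x,y)\,\nu(ds)\bigr)\,\mu(dx)\,\mu(dy)$, i.e.\ the pure-jump form with $J(x,y)=\tfrac12\int_0^\infty p^c_s(x,y)\,\nu(ds)$.

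It remains to confirm that $(\sE^Y,\sF^Y)$ is the regular Dirichlet form \emph{corresponding} to $Y_t=X_{S_t}$. Using concavity of $\phi$ (so $\phi(\lambda)\le\phi(1)(1+\lambda)$) one gets $\sF^c\subseteq\sF^Y$ with $\sE^Y_1\lesssim\sE^c_1$ on $\sF^c$, while spectral truncation $f\mapsto E_n f$ shows $\sF^c$ is $\sE^Y_1$-dense in $\sF^Y$; combined with the regularity of $(\sE^c,\sF^c)$ this makes $(\sE^Y,\sF^Y)$ regular, and the standard theory of Bochner subordination identifies its associated Hunt process with $X_{S_t}$. I expect the main obstacle to be not depth but care: justifying the spectral-calculus manipulations and the two passages to the limit/interchange (monotone convergence as $t\downarrow 0$ and the Tonelli swap between $\nu$ and $\mu\times\mu$) under only the stated hypotheses, and being precise about the drift term so that the ``pure jump'' conclusion is correctly accounted for. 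Since this is classical and available as \cite[Theorem 2.1]{Oku}, in the paper one may simply cite it; the above is the route for a self-contained argument.
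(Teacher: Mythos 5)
The paper gives no proof of this lemma; it simply cites \cite[Theorem 2.1]{Oku}. Your self-contained argument is the standard Phillips/Bochner-subordination route and it is correct: the identity $\langle P^Y_t f, f\rangle=\int_{[0,\infty)}e^{-t\phi(\lambda)}\,d\langle E_\lambda f,f\rangle$ follows from Tonelli and $\bE e^{-\lambda S_t}=e^{-t\phi(\lambda)}$; the map $t\mapsto(1-e^{-t\phi(\lambda)})/t$ is indeed nonincreasing (since $(1+u)e^{-u}<1$ for $u>0$), so monotone convergence together with \eqref{e:df1}--\eqref{e:df2} gives $\sE^Y(f,f)=\int\phi(\lambda)\,d\langle E_\lambda f,f\rangle$ on the stated domain; the Bernstein split and the conservativeness identity $\langle f-P^c_s f,f\rangle=\tfrac12\iint (f(x)-f(y))^2p^c_s(x,y)\,\mu(dx)\mu(dy)$ are both correct, and the final Tonelli swap produces exactly the claimed kernel. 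You also correctly spot the one imprecision in the lemma as stated: without drift $a=0$ the subordinated form carries a strongly local piece $a\,\sE^c$, so ``pure jump'' requires $a=0$. In the paper's application (proof of Theorem \ref{t:char}) the subordinator is built from a L\'evy measure alone, i.e.\ with zero drift, so the hypothesis is satisfied there; Okura's theorem handles the general decomposition. Your regularity remark (using $\phi(\lambda)\le\phi(1)(1+\lambda)$ to embed $\sF^c\hookrightarrow\sF^Y$, spectral truncation for density, then inheriting regularity from $(\sE^c,\sF^c)$) is the standard closing step and is fine. In short: the paper outsources this to Okura, and your proof is a valid and appropriately careful reconstruction of what that reference proves, with the bonus of making the $a=0$ hypothesis explicit.
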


The following elementary estimate along with Lemma \ref{l:subjump} provides the desired bounds on the jump kernel of subordinated process.
%
%

\begin{prop} \label{p:levymeas}
	Let $(M,d,\mu)$ an unbounded, complete, separable metric measure space. 
	Let $(\sE^c,\sF^c)$ be a regular, strongly local Dirichlet form on $L^2(M,\mu)$ that satisfies \hyperlink{phi}{$\on{PHI(\psi_c)}$}, where $\psi_c$ is a scale function.  Let  $(\sE^j,\sF^j)$ be a pure jump type Dirichlet form  such that the corresponding jumping measure satisfies  \hyperlink{jpsijge}{$\on{J(\psi_j)_\ge}$}, where $\psi_j$ is a scale function. Then 
	\[
	\int_0^1 \frac{1}{ \psi_j \circ \psi_c^{-1}(t)} \, dt <\infty.
	\]
\end{prop}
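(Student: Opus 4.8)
The plan is to read off the integrability from the mere existence of one nonzero function in $\sF^j\cap C_c(M)$, using the heat kernel of $X$ as a bridge. Since $(\sE^c,\sF^c)$ satisfies $\on{PHI(\psi_c)}$, the metric measure space satisfies $\on{VD}$ and $\on{RVD}$, the semigroup $(P^c_t)$ is conservative (the volume growth forced by $\on{VD}$ is at most polynomial), and by Theorem~\ref{t:phi-hke} the heat kernel $p^c_t$ of $X$ satisfies $\on{HKE}(\psi_c)$; hence Lemma~\ref{l:levyjump} applies and yields
\[
\frac{1}{\mu\bigl(B(x,d(x,y))\bigr)\,\psi_j(d(x,y))}\ \asymp\ \int_0^\infty \frac{p^c_t(x,y)}{t\,\psi_j\circ\psi_c^{-1}(t)}\,dt,\qquad x,y\in M.
\]

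First I would fix, using regularity of $(\sE^j,\sF^j)$, a function $f\in\sF^j\cap C_c(M)$ with $f\not\equiv 0$. The assumed lower bound on the jump kernel gives $\sE^j(f,f)\gtrsim \int_M\int_M (f(x)-f(y))^2\,\mu(B(x,d(x,y)))^{-1}\psi_j(d(x,y))^{-1}\,\mu(dx)\,\mu(dy)$, so substituting the displayed equivalence and applying Tonelli's theorem,
\[
\infty>\sE^j(f,f)\ \gtrsim\ \int_0^\infty \frac{1}{t\,\psi_j\circ\psi_c^{-1}(t)}\Bigl(\int_M\int_M (f(x)-f(y))^2\,p^c_t(x,y)\,\mu(dx)\,\mu(dy)\Bigr)dt.
\]
By symmetry of $p^c_t$ and conservativeness of $P^c_t$, the inner double integral equals $2\langle f-P^c_tf,f\rangle$, and since $t\mapsto t^{-1}\langle f-P^c_tf,f\rangle$ is non-increasing (spectral calculus, recalled around \eqref{e:df3}) we get $\langle f-P^c_tf,f\rangle\ge c_f\,t$ for all $t\in(0,1]$, where $c_f:=\langle f-P^c_1f,f\rangle\ge 0$. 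Restricting the last integral to $(0,1]$ and inserting this bound yields $c_f\int_0^1\bigl(\psi_j\circ\psi_c^{-1}(t)\bigr)^{-1}dt<\infty$; since $\psi_j\circ\psi_c^{-1}$ is continuous and strictly positive, the proposition follows provided $c_f>0$.

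It remains to prove $c_f>0$, and this is where the Poincar\'e inequality enters. If $c_f=0$ then, by the spectral theorem, $f\in\sF^c$ with $\sE^c(f,f)=0$; since $(\sE^c,\sF^c)$ is strongly local and $f$ is bounded, the energy measure $\Gamma(f,f)$ is the zero measure, so the Poincar\'e inequality (Lemma~\ref{l:poin}) forces $f=f_{B(x,r)}$ $\mu$-a.e.\ on every ball $B(x,r)$. As $f$ is continuous and $\mu$ has full support, $f$ is then genuinely constant on each ball, hence constant on all of $M$, because $\on{PHI(\psi_c)}$ for a strongly local form forces $M$ to be connected. But a nonzero constant cannot have compact support on the unbounded space $M$, so $f\equiv 0$, contradicting the choice of $f$; hence $c_f>0$.

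The delicate point is this last step: $f$ is a priori known only to lie in $\sF^j$, so one cannot directly invoke positivity of the \emph{diffusion's} Dirichlet energy; one must instead observe that $c_f=0$ would force $f\in\sF^c$ and then combine strong locality, the Poincar\'e inequality, and connectedness of $M$. Alternatively, one could bound the inner double integral from below directly via the near-diagonal estimate \eqref{e:nlhke} and a variance bound, avoiding any appeal to conservativeness, at the cost of a longer computation.
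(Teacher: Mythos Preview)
Your argument is essentially the paper's own: both convert the jump energy into $\int_0^\infty (t\,\psi_j\circ\psi_c^{-1}(t))^{-1}\langle f-P_t^c f,f\rangle\,dt$ via Lemma~\ref{l:levyjump}, exploit the monotonicity of $t^{-1}\langle f-P_t^c f,f\rangle$, and then use the Poincar\'e inequality to rule out $\sE^c(f,f)=0$ for nonzero $f$. The paper phrases it as a contradiction showing that divergence of the integral would force every $f\in\sF^j$ to have $\sE^c(f,f)=0$, while you isolate a single $f$ and argue directly; these are the same proof.

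One small point: the proposition as stated does not assume $(\sE^j,\sF^j)$ is regular, so your appeal to regularity to produce $f\in\sF^j\cap C_c(M)$ is an extra hypothesis. This is harmless for the paper's purposes (Theorem~\ref{t:char} does assume regularity), and your argument is easily repaired: take any nonzero $f\in\sF^j$, conclude from $c_f=0$ that $f$ is $\mu$-a.e.\ constant, and note that since $\on{RVD}$ forces $\mu(M)=\infty$, the only constant in $L^2(M,\mu)$ is zero. The paper avoids this by observing that constants are not dense in $L^2(M,\mu)$.
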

\proof
Assume to the contrary that
\be \label{e:lc1}
	\int_0^1 \frac{1}{ \psi_j \circ \psi_c^{-1}(t)} \, dt =\infty.
\ee
Since $t\mapsto \psi_j \circ \psi_c^{-1}(t)$ is a continuous positive function on $(0,\infty)$, we have 
\be \label{e:lc1'}
\int_0^{t_0} \frac{1}{ \psi_j \circ \psi_c^{-1}(t)} \, dt =\infty,
\ee
for any $t_0>0$.
 Let $(P_t^c)_{t>0}$ denote the Markov semigroup corresponding to the Dirichlet form $(\sE^c,\sF^c)$ on $L^2(M,\mu)$ and let $p_t^c(\cdot,\cdot)$ denote the corresponding heat kernel (which exists by Theorem \ref{t:phi-hke}). Note that
 \[
 \frac{1}{t}\langle f-P_t^c f, f\rangle = \frac{1}{2t} \int_M \int_M p_t^c(x,y) (f(x)-f(y))^2 \, \mu(dx)\,\mu(dy).
 \]
Therefore by Lemma \ref{l:levyjump}, there exists $C_1>0$ such that 
\be
\label{e:lc2}
C_1^{-1} \sE^j(f,f) \le \int_{0}^\infty \frac{1}{t \psi_j \circ \psi_c^{-1}(t)}\langle f-P_t^c f, f\rangle \,dt \le C_1 \sE^j(f,f)\q \mbox{for all $f \in \sF^j$.}
\ee

Let $f \in \sF^j$. Choose $N,\eps \in  (0,\infty)$.
We define $\sE^c(f,f)$ for any $f \in L^2(M,\mu)$ by setting $\sE^c(f,f)=\infty$ whenever $f \in L^2(M,\mu) \setminus \sF^c$.
 Let $(P_t^c)_{t>0}$ denote the Markov semigroup corresponding to the Dirichlet form $(\sE^c,\sF^c)$ on $L^2(M,\mu)$.
By \eqref{e:df3}, there exists $t_0 \in (0,1)$ (depending on $f,N,\eps$) such that for any $t \in (0,t_0)$,
\be \label{e:lc3}
\frac{1}{t}\langle f-P_t^c f, f\rangle \ge (N \wedge \sE^c(f,f)) - \eps.
\ee
Combining \eqref{e:lc2} and \eqref{e:lc3}, 
\begin{align}
\infty &> \sE^j(f,f) \\
& \ge C_1^{-1}\int_0^{t_0} \frac{1}{t \psi_j \circ \psi_c^{-1}(t) }\langle f-P_t^c f, f\rangle \,dt \\
& \ge \left[(N \wedge \sE^c(f,f)) - \eps\right]  \int_0^{t_0}\frac{1}{ \psi_j \circ \psi_c^{-1}(t)} \, dt \label{e:lc4}
\end{align}
Combining \eqref{e:lc1'} and \eqref{e:lc4}, we obtain that $N \wedge \sE^c(f,f)\le \eps$. By letting   $\eps \to 0$, and using \eqref{e:df1}, \eqref{e:df2}, we obtain that 
\be \label{e:lc5}
\sE^c(f,f) =0, \q \mbox{for all $f \in \sF^j$.}
\ee 
By the Poincar\'e inequality (Lemma \ref{l:poin}), this implies that any $f \in \sF^j$ is constant $\mu$-almost everywhere on every ball $B(x,r)$. In particular, every function in $\sF^j$ is constant $\mu$-almost everywhere. This implies that $\sF^j$ is not dense in $L^2(M,\mu)$, contradicting the assumption that $(\sE^j,\sF^j)$ is a Dirichlet form.
\qed

The following result is an elementary consequence of change of variables formula.
\begin{lem} \label{l:integral}
	Let $\psi_c,\psi_j$ be scale functions. Then $	\int_0^1 \frac{\psi_c(s)}{s\psi_j(s)} \,ds < \infty$ is equivalent to $	\int_0^1 \frac{1}{ \psi_j \circ \psi_c^{-1}(t)} \, dt <\infty$.
\end{lem}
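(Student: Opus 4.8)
\textbf{Proof proposal for Lemma \ref{l:integral}.}

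The plan is to perform the substitution $t = \psi_c(s)$, equivalently $s = \psi_c^{-1}(t)$, in the integral $\int_0^1 \frac{1}{\psi_j \circ \psi_c^{-1}(t)}\,dt$, and then control the resulting Stieltjes-type integral using the doubling bounds \eqref{e:sf} satisfied by the scale functions. Since $\psi_c$ is a homeomorphism of $\bR_+$, the change of variables gives $dt = d\psi_c(s)$, and formally
\[
\int_0^{\psi_c(1)} \frac{1}{\psi_j \circ \psi_c^{-1}(t)}\,dt = \int_0^1 \frac{1}{\psi_j(s)}\,d\psi_c(s).
\]
One subtlety is that a scale function is only assumed to be a homeomorphism, so $\psi_c$ need not be absolutely continuous or differentiable; I would therefore carry out the change of variables at the level of Lebesgue--Stieltjes integrals (which is valid for any continuous monotone $\psi_c$) rather than writing $d\psi_c(s) = \psi_c'(s)\,ds$. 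Also note $\psi_c(1)$ may differ from $1$, but since $t \mapsto \psi_j \circ \psi_c^{-1}(t)$ is continuous and positive, finiteness of the integral over $(0,1)$ is equivalent to finiteness over $(0,\psi_c(1))$ (the integrand is bounded on any compact subinterval of $(0,\infty)$), so this discrepancy is harmless.

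The remaining task is to show $\int_0^1 \frac{1}{\psi_j(s)}\,d\psi_c(s) \asymp \int_0^1 \frac{\psi_c(s)}{s\,\psi_j(s)}\,ds$. Here the key mechanism is that the doubling property \eqref{e:sf} for $\psi_c$ says $\psi_c$ grows polynomially with exponents in $[\beta_1,\beta_2]$, which makes the Stieltjes measure $d\psi_c$ comparable, in an integrated sense, to $\frac{\psi_c(s)}{s}\,ds$. Concretely, I would use a dyadic decomposition: partition $(0,1)$ into intervals $I_k = (2^{-k-1}, 2^{-k}]$ for $k \ge 0$. On each $I_k$, both $\psi_c(s)$ and $\psi_j(s)$ are comparable to their values at $2^{-k}$ up to the multiplicative constant from \eqref{e:sf}, so $\frac{1}{\psi_j(s)}$ is comparable to $\frac{1}{\psi_j(2^{-k})}$ throughout $I_k$; meanwhile $\int_{I_k} d\psi_c(s) = \psi_c(2^{-k}) - \psi_c(2^{-k-1}) \asymp \psi_c(2^{-k})$, again by \eqref{e:sf} (the lower bound in \eqref{e:sf} gives $\psi_c(2^{-k-1}) \le C^{-1} 2^{-\beta_1}\psi_c(2^{-k}) \cdot$const $< \psi_c(2^{-k})$ up to a fixed gap, so the difference is a fixed fraction of $\psi_c(2^{-k})$). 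Likewise $\int_{I_k} \frac{\psi_c(s)}{s}\,ds \asymp \psi_c(2^{-k})$, since $\frac{\psi_c(s)}{s} \asymp \frac{\psi_c(2^{-k})}{2^{-k}}$ on $I_k$ and $|I_k| = 2^{-k-1}$. Hence both sides of the claimed equivalence are comparable to $\sum_{k \ge 0} \frac{\psi_c(2^{-k})}{\psi_j(2^{-k})}$, and in particular one is finite if and only if the other is.

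I expect the main (minor) obstacle to be the rigorous handling of the change of variables for a merely continuous, non-differentiable $\psi_c$, and the bookkeeping to ensure the constant from \eqref{e:sf} does not degenerate when estimating $\psi_c(2^{-k}) - \psi_c(2^{-k-1})$ from below — one must check that the doubling exponent bound forces this increment to be a definite fraction of $\psi_c(2^{-k})$, which follows because $\psi_c(2^{-k-1})/\psi_c(2^{-k}) \le C 2^{-\beta_1}$ need not be less than $1$ if $C$ is large, so instead I would iterate over blocks of $m$ dyadic scales with $m$ chosen (depending on $C, \beta_1$) so that $C 2^{-m\beta_1} < 1/2$, and run the dyadic argument on the coarser partition $(2^{-(k+1)m}, 2^{-km}]$. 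With that adjustment the comparison is routine, and combining it with the first paragraph completes the proof. \qed
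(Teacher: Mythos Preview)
Your argument is correct and shares the same core idea as the paper's proof: both perform the change of variables $t = \psi_c(s)$ and then show that $d\psi_c(s)$ behaves like $\frac{\psi_c(s)}{s}\,ds$ for the purposes of this integral. The difference lies in how the non-differentiability of $\psi_c$ is handled. You work directly with the Lebesgue--Stieltjes integral and compare $\int \frac{1}{\psi_j(s)}\,d\psi_c(s)$ to $\int \frac{\psi_c(s)}{s\psi_j(s)}\,ds$ via a dyadic (or coarser $2^m$-adic) decomposition, which is entirely valid and self-contained. The paper instead uses a cleaner regularization device: it observes that \eqref{e:sf} implies $\psi_c(t) \asymp \int_0^t \frac{\psi_c(r)}{r}\,dr$, so one may replace $\psi_c$ by this comparable $C^1$ function, for which $\psi_c'(r) \asymp \frac{\psi_c(r)}{r}$ holds pointwise, and then the ordinary change-of-variables formula applies directly without any dyadic bookkeeping. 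Your route is slightly longer but avoids the need to justify that replacing $\psi_c$ by a comparable function preserves the finiteness of both integrals (which is routine but does require noting that $\psi_c^{-1}$ and the composite $\psi_j \circ \psi_c^{-1}$ inherit doubling bounds); the paper's route is shorter once one spots the regularization trick.
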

\proof 
It is easy to verify using \eqref{e:sf} that $\psi_c(t)$ is comparable to the function
\[
t \mapsto \int_0^t \frac{\psi_c(r)}{r} \,dr.
\]
Therefore, we assume without loss of generality that $\psi_c$ is continuously differentiable and
\begin{equation} \label{e:cv1}
\psi_c'(r) \asymp \frac{ \psi_c(r)}{r}
\end{equation}
for all $r>0$. Using \eqref{e:cv1} and substituting $t=\psi_c(s)$ in the integral $\int_0^1 \frac{1}{ \psi_j \circ \psi_c^{-1}(t)} \, dt$, we obtain the desired equivalence.
\qed
\subsection{Proof of the main results}
\noindent {\em Proof of Theorem \ref{t:char}.} 
The parabolic Harnack inequality implies that $(M,d)$ is connected \cite[Proposition 5.6]{GH}, \cite[Theorem 5.4]{BCM}. By \cite[Exercise 13.1]{Hei}, $(M,d)$ satisfies \hyperlink{rvd}{$\operatorname{RVD}$}.

That (b) implies (a) is obvious. 

Next, we show that (c) implies (b).
By Lemma \ref{l:integral}, the measure $\nu(t):= \frac{1}{t \psi_j \circ \psi_c^{-1}(t)} \,dt$ is a L\'evy measure of subordinator. Let $S$ denote the subordinator corresponding to the L\'evy measure $\nu$. By \cite[Theorems 1.2 and 1.3]{GHL15} and Theorem \ref{t:phi-hke}, the Markov semigroup corresponding to $(\sE^c,\sF^c)$ is conservative. By Lemmas \ref{l:subjump} and \ref{l:levyjump}, the subordinated diffusion process $X \circ S$ is a $\mu$-symmetric pure jump process whose jump \hyperlink{jpsij}{$\on{J(\psi_j)}$}.

Finally,  (a) implies (c), following from Proposition \ref{p:levymeas} and Lemma \ref{l:integral}.
\qed

\noindent {\em Proof of Corollary \ref{c:main}.} 
\begin{enumerate}[(a)]
	\item By Theorem \ref{t:char}, we obtain \eqref{e:crit}. Therefore, by \cite[Theorem 2.19 and Lemma 4.5]{BKKL},  Lemma \ref{l:integral},  along with the characterization of parabolic Harnack inequality in \cite[Theorem 1.20]{CKW2}, we obtain (a).
	\item 
	The  estimate \eqref{e:cor1} follows from 
	\[
	\int_0^r   \frac{\psi_c(s)}{ s \psi_j(s)} \, ds  \ge \int_{r/2}^r   \frac{\psi_c(s)}{ s \psi_j(s)} \, ds  \ge   \frac{\psi_c(r/2)}{2\psi_j(r)} \gtrsim\frac{\psi_c(r)}{\psi_j(r)}.
	\]
	Using the above estimate 
	\[
	\frac{\psi_c(r)}{\psi_j(r)} 
	\lesssim \int_{r/2}^r   \frac{\psi_c(s)}{ s \psi_j(s)} \, ds \le \int_{0}^1   \frac{\psi_c(s)}{ s \psi_j(s)} \, ds
	\]
	for any $r \le 1$. This along with \eqref{e:crit} in Theorem \ref{t:char} yields \eqref{e:cor2}.
	By part (a), for any $0 < r \le R$, we obtain
	\[
	\frac{\wh \psi_j(R)}{\wh \psi_j(r)} \frac{\psi_c(r)}{\psi_c(R)} \lesssim  \frac {\int_0^r   \frac{\psi_c(s)}{ s \psi_j(s)} \, ds } {\int_0^R   \frac{\psi_c(s)}{ s \psi_j(s)} \, ds } \lesssim 1.
	\]
	This implies \eqref{e:cor3}. The estimates in \eqref{e:cor4} follow from \eqref{e:cor3} by substituting $r=1$ and $R=1$ respectively.
\end{enumerate}
\qed

\subsection*{Acknowledgements} 
We thank Professors Zhen-Qing Chen and  Takashi Kumagai for  helpful remarks and a reference. In particular, we learned about \cite{Oku} from ZQC.

\noindent  \\
Department of Mathematical Sciences, Tsinghua University, Beijing 100084, China.\\
liu-gh17@mails.tsinghua.edu.cn \sms\\

\noindent Department of Mathematics, University of British Columbia,
Vancouver, BC V6T 1Z2, Canada. \\
mathav@math.ubc.ca

\end{document}